\renewcommand{\aa}{{\bm a}}
\newcommand{\bb}{{\bm b}}
\newcommand{\BB}{\mathcal{B}}
\newcommand{\bbox}{\operatorname{box}}
\newcommand{\CC}{\mathbf{C}}
\newcommand{\flush}{\operatorname{flush}}
\newcommand{\non}{\operatorname{non}}
\newcommand{\seg}{\operatorname{seg}}
\newcommand{\wt}{{\rm wt}}
\newcommand{\zz}{{\bm z}}
\newcommand{\ZZ}{\mathbf{Z}}
\theoremstyle{plain}
\newtheorem{thm}{Theorem}[section]
\theoremstyle{definition}
\newtheorem{dfn}[thm]{Definition}
\newtheorem{ex}[thm]{Example}
\newtheorem{remark}[thm]{Remark}
\numberwithin{equation}{section}
\numberwithin{figure}{section}
\numberwithin{table}{section}
\begin{document}
\title{The flush statistic on semistandard Young tableaux}
\author{Ben Salisbury}
\address{Department of Mathematics \\ Central Michigan University \\ Mt. Pleasant, MI 48859}% \\ {\it Phone:} (989) 774-3086 \\ {\it Fax:} (989) 774-2414}
\email{ben.salisbury@cmich.edu}
\urladdr{http://people.cst.cmich.edu/salis1bt/}
\keywords{Casselman-Shalika formula, crystals, segments, Young tableaux}
\date{\today}
\subjclass[2010]{Primary 05E10; Secondary 17B37}
\begin{abstract}
In this note, a statistic on Young tableaux is defined which encodes data needed for the Casselman-Shalika formula.%\\

%\noindent \textbf{La statistique align\'ee sur tableaux de Young semistandard.}
%\noindent \textsc{R\'esum\'e}. 
%Dans cette note, une statistique sur les tableaux de Young est d\'efini qui encode les donn\'ees n\'ecessaires pour la formule de Casselman-Shalika.
\end{abstract}
\maketitle

%\section{Introduction}

In joint work with Kyu-Hwan Lee and Phil Lombardo \cite{LLS-A}, the author reinterpreted the Casselman-Shalika formula expression due to Brubaker-Bump-Friedberg \cite{BBF:11-ann,BBF:11-book} and Bump-Nakasuji \cite{BN:10} as a sum over the crystal graph (based on work of Tokuyama \cite{Tok:88}) to a sum over tableaux.  The expression given in the work of Bump and Nakasuji involves taking paths in the  graph of a highest weight crystal from a given vertex to the highest weight vector and decorating the path.  These decorations, called boxing and circling, prescribe contributions at a vertex in the form of Gauss sums (coming from the theory of Weyl group multiple Dirichlet series and Whittaker functions).  The resulting function, formed by summing the contributions over the crystal together with their respective weights, has been coined a {\it Tokuyama function}.  

The benefit to a tableaux description of the Tokuyama function means that, in practice, one no longer needs to compute the entire path to a the highest weight vertex in the crystal graph, which may be very large.  Instead, one can extract the essential data from the content of the tableaux at the vertex and obtain the same function.  This tableaux description was explained in \cite{LLS-A} by using reparametrizations $\aa(T)$ and $\bb(T)$ of the string parametrization obtained directly from data in a tableau $T$, but again required the calculation of a sequence (in our case, two sequences).  It is the goal of this work to interpret these two sequences as statistics on tableaux.

Such a statistic was created in the context of the Gindikin-Karpelevich formula, again based on the work of Brubaker-Bump-Friedberg \cite{BBF:11-ann,BBF:11-book} and Bump-Nakasuji \cite{BN:10}.  This formula, from the context of crystals, may be viewed as the Verma module analogue of the highest weight calculation used in the Casselman-Shalika formula.  In \cite{LS-ABCDG,LS-A}, we were able to recover the path to the highest weight vector using the marginally large tableaux of J.\ Hong and H.\ Lee \cite{HL:08}, which is a certain enlargement of semistandard Young tableaux, and interpret the decorations.  The corresponding statistic was called the \emph{segment} statistic and may be easily read off from a marginally large tableaux.  Outside of type $A_r$, the proof in \cite{LS-ABCDG} relied on the interpretation of the Gindikin-Karpelevich formula as a sum over Lusztig's canonical basis given by H.\ H.\ Kim and K.-H.\ Lee in \cite{KL:11,KL:12} and did not require any decorated paths to the highest weight in the crystal graph.

From the decorated path point of view, the Gindikin-Karpelevich formula only requires the circling rule, and the segment statistic on marginally large tableaux completely encodes the circling data.  Moreover, there exists an embedding from semistandard Young tableaux to marginally large tableaux which preserves the path to the highest weight.  More precisely, if $\BB(\lambda+\rho)$ is the crystal of highest weight $\lambda+\rho$ parametrized by semistandard Young tableaux of shape $\lambda + \rho$, then there is an embedding into the crystal of marginally large tableaux $\BB(\infty)$ such that the circling rule is preserved.  Understanding this embedding leads one to a definition of segments on ordinary semistandard Young tableaux of fixed shape, so that one is only left to understand the boxing rule.  It is the latter problem where the results from \cite{LLS-A} become crucial, as the definition of the sequences defined there lead to a descriptive picture of what the boxing rule means on the tableaux level.

The way to understand the boxing rule again involves the idea of segments in a tableau, but it is how these segments are arranged in the tableau which will encode the boxing rule.  In this note, we define the \emph{flush} statistic on semistandard Young tableaux $T$ in $\BB(\lambda+\rho)$, which, loosely speaking, is the number of segments in $T$ whose left-most box in the segment is in the same column as the left-most box of the subsequent segment in the row beneath it (in English notation for tableaux).  In other words, the number of segments who are flush-left with their neighbor below.  The notions of ``subsequent segment'' and ``neighbor below'' are made precise in Definition \ref{def}(\ref{defflush}) below.  It turns out that $\flush(T)$ is exactly the number of boxed entries of $\aa(T)$ and is equal to the number of boxed entries in the decorated path from $T$ to the highest weight vector of $\BB(\lambda+\rho)$.

It is the hope that the statistics developed here will help shed some new light on the Casselman-Shalika formula outside of type $A_r$.  Currently, there are boxing and circling rules defined and verified in types $B_r$ \cite{FZ:13} and $C_r$ \cite{BBF:11-C,BBF:S}, but only conjectural formulas in types $D_r$ \cite{CG:S} and type $G_2$ \cite{FGG:S}.

\section{Crystals and Tableaux}\label{sec:crystals}
We start by recalling the setup from \cite{LLS-A}.  Let $r\geq 1$ and suppose $\mathfrak{g} = \mathfrak{sl}_{r+1}$ with simple roots $\{ \alpha_1,\dots,\alpha_r\}$, and let $I = \{1,\dots,r\}$. Let $P$ and $P^+$ denote the weight lattice and the set of dominant integral weights, respectively. Denote by $\Phi$ and $\Phi^+$, respectively, the set of roots and the set of positive roots. Let $\{h_1,\dots,h_r\}$ be the set of coroots and define a pairing $\langle \ ,\ \rangle \colon P^\vee\times P \longrightarrow \ZZ$ by $\langle h,\lambda \rangle = \lambda(h)$, where $P^\vee$ is the dual weight lattice. Let $\mathfrak{h} = \CC \otimes_\ZZ P^\vee$ be the Cartan subalgebra, and let $\mathfrak{h}^*$ be its dual. Denote the {\it Weyl vector} by $\rho$; this is the element $\rho\in \mathfrak{h}^*$ defined as
$
\rho = \frac12\sum_{\alpha >0} \alpha = \sum_{i=1}^r \omega_i,
$
where $\omega_i$ is the $i$th fundamental weight.  The set of roots for $\mathfrak{g}$ will be denoted by $\Delta$, while $\Delta^+$ will denote the set of positive roots and $N = \#\Delta^+$.

A {\it $\mathfrak{g}$-crystal} is a set $\BB$ together with maps
$
\widetilde e_i, \widetilde f_i\colon \BB \longrightarrow \BB\sqcup\{0\}$, 
$\varepsilon_i,\varphi_i\colon \BB \longrightarrow \ZZ\sqcup\{-\infty\}$, and 
$\wt\colon \BB \longrightarrow P,
$
such that, for all $b,b'\in \BB$ and $i\in I$, we have $\widetilde f_ib = b'$ if and only if $\widetilde e_ib' = b$, $\wt(\widetilde f_ib) = \wt(b) - \alpha_i$, and $\langle h_i,\wt(b)\rangle = \varphi_i(b) - \varepsilon_i(b)$.  The maps $\widetilde e_i$ (resp.\ $\widetilde f_i$) for $i\in I$ are called the {\it Kashiwara raising operators} (resp.\ {\it Kashiwara lowering operators}).
(For more details, see, for example, \cite{HK:02,Kash:95}.)
To each highest weight representation $V(\lambda)$ of $\mathfrak{g}$, there is an associated highest weight crystal $\BB(\lambda)$ which serves as a combinatorial frame of the representation $V(\lambda)$.  The only fact we will use in this note is that $\BB(\lambda)$ as a set may be realized as the set of semistandard tableaux of shape $\lambda$ over the alphabet $\{1,\dots,r+1\}$ with the usual ordering, where $\lambda = a_1\omega_1 + \cdots + a_r\omega_r$ is identified with the partition having $a_i$ columns of height $i$, for each $1\le i \le r$.

\section{Using the tableaux model}\label{sec:tableaux}

We now recall the definitions and result from \cite{LLS-A}.

\begin{dfn}[\cite{LLS-A}]
Let $\lambda \in P^+$ and $T\in \BB(\lambda+\rho)$ be a tableau.
\begin{enumerate}
\item Define $\aa_{i,j}$ to be the number of $(j+1)$-colored boxes in rows $1$ through $i$ for $1 \leq i \leq j \leq r$, and define the vector $\aa(T) \in \ZZ_{\ge0}^N$ by
\[
\aa(T)=(\aa_{1,1}, \aa_{1,2}, \ldots, \aa_{1,r}; \aa_{2,2}, \ldots , \aa_{2,r}; \ldots ; \aa_{r,r} ).
\]
\item  The number $\bb_{i,j}$ is defined to be  the number of boxes in the $i$th row which have color greater or equal to $j+1$ for $1 \leq i \leq j \leq r$.  Set 
\[
\bb(T) = (\bb_{1,1},\dots,\bb_{1,r};\bb_{2,2},\dots,\bb_{2,r};\cdots;\bb_{r,r}).
\]
\item Write $\lambda+\rho$ as
$
\lambda+\rho = (\ell_1 > \ell_2 > \cdots > \ell_r > \ell_{r+1}= 0),
$
and define $\theta_i = \ell_i- \ell_{i+1}$ for $i=1,\dots,r$.  Let $\theta = (\theta_1,\dots,\theta_r)$.
\end{enumerate}
\end{dfn}

In \cite{LLS-A}, we give a definition of boxing and circling on the entries of $\aa(T)=(\aa_{i,j})$ for $T \in \BB(\lambda + \rho)$ based on the boxing and circling decorations on BZL paths in \cite{BBF:11-ann,BBF:11-book}.
\begin{align}
&\text{Box $\aa_{i,j}$ if $\bb_{i,j} \geq \theta_i + \bb_{i+1,j+1}$.}\tag{B-II}\label{ourbox}\\
&\text{Circle $\aa_{i,j}$ if $\aa_{i,j} = \aa_{i-1,j}$.}\tag{C-II}\label{ourcirc}
\end{align}
Set $\non(T)$ to be the number of entries in $\aa(T)$ which are neither circled nor boxed, and define $\bbox(T)$ to be the number of entries in $\aa(T)$ which are boxed.  Additionally, borrowing the vernacular of the Gelfand-Tsetlin pattern setting of Tokuyama \cite{Tok:88}, we say that $T$ is \emph{strict} if $\aa(T)$ has no entry which is both boxed and circled.  Now define a function $C_{\lambda+\rho,q}$ on $\BB(\lambda+\rho)$ with values in $\ZZ[q^{-1}]$ by
\[
C_{\lambda+\rho}(T;q) =
\left\{\begin{array}{cl}
(-q^{-1})^{\bbox(T)}(1-q^{-1})^{\non(T)} & \text{if $T$ is strict},\\
0 & \text{otherwise}.
\end{array}\right.
\]

\begin{thm}[\cite{LLS-A}]\label{thm:CS-A}
We have
\begin{equation}\label{eq:CS-A}
\zz^\rho\chi_\lambda(\zz)\prod_{\alpha>0} (1-q^{-1}\zz^{-\alpha})
= \sum_{T\in \BB(\lambda+\rho)} C_{\lambda+\rho}(T;q) \zz^{\wt(T)}.
\end{equation}
\end{thm}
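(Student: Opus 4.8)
The plan is to reduce the identity \eqref{eq:CS-A} to the already-established expression of the Casselman--Shalika formula as a sum over the crystal graph $\BB(\lambda+\rho)$, in the form given by Bump--Nakasuji \cite{BN:10} and Brubaker--Bump--Friedberg \cite{BBF:11-ann,BBF:11-book}, whose prototype is Tokuyama's theorem \cite{Tok:88}. In that formulation the left-hand side of \eqref{eq:CS-A} is written as a sum over the vertices of $\BB(\lambda+\rho)$ in which each vertex contributes a product of Gauss sums read off along a fixed path to the highest weight vector, each factor decorated as boxed, circled, or neither. It therefore suffices to show that, term by term, the contribution $C_{\lambda+\rho}(T;q)\,\zz^{\wt(T)}$ attached to a tableau $T$ agrees with the decorated-path contribution attached to the corresponding crystal vertex.

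First I would fix the reduced expression for the longest Weyl group element used to build the path and recall the string parametrization of the path from the vertex $T$ to the highest weight vector; this produces a sequence of nonnegative integers recording the lengths of the successive strings of Kashiwara operators $\widetilde e_i$. The key bookkeeping step is to show that this sequence is recovered directly from the tableau as $\aa(T)$, that is, that $\aa_{i,j}$, the number of $(j+1)$-colored boxes in rows $1$ through $i$, equals the corresponding entry of the string parametrization. This is the reparametrization of \cite{LLS-A}, and it lets us replace traversal of the (possibly enormous) crystal by reading contents off $T$. The weight factor $\zz^{\wt(T)}$ then matches automatically, since $\wt(T)$ is by definition the weight of the crystal vertex.

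Next I would translate the two path decorations into the intrinsic tableau rules. The circling rule on the path, which compares an entry of the string parametrization with a neighboring entry, must be shown to coincide with \eqref{ourcirc}, the condition $\aa_{i,j}=\aa_{i-1,j}$; and the boxing rule must be shown to coincide with \eqref{ourbox}, the inequality $\bb_{i,j}\ge\theta_i+\bb_{i+1,j+1}$. Here the auxiliary data $\bb(T)$ and $\theta$ enter precisely because the boxing condition on the path is phrased in terms of $\varphi_i$- and $\varepsilon_i$-type quantities, and $\bb_{i,j}$ together with $\theta_i=\ell_i-\ell_{i+1}$ repackage exactly these quantities at the tableau level. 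Once the decorations match, the Gauss-sum contributions collapse to the stated factors: a boxed entry yields $-q^{-1}$, an undecorated entry yields $1-q^{-1}$, a circled entry yields $1$, and a simultaneously boxed-and-circled entry forces the contribution to vanish, which is exactly the strictness dichotomy built into $C_{\lambda+\rho}(T;q)$.

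I expect the main obstacle to be this decoration-matching step, and in particular the verification of \eqref{ourbox}. Whereas the circling rule \eqref{ourcirc} is a transparent equality of adjacent string lengths, the boxing rule is governed on the path by inequalities among $\varepsilon_i,\varphi_i$ along the descent, and one must check that these are faithfully encoded by the row-wise counts $\bb_{i,j}$ and the shape differences $\theta_i$. Making this precise requires an explicit description of how the chosen sequence of Kashiwara operators acts on a semistandard tableau, and in particular control of how the $(j+1)$-colored boxes in successive rows interact, so that the path-level inequality is shown to be equivalent to \eqref{ourbox}. Once this equivalence is in hand, summing the matched contributions over all $T\in\BB(\lambda+\rho)$ reproduces the crystal-graph sum and hence the left-hand side of \eqref{eq:CS-A}.
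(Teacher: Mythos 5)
This theorem is not proved in the paper at all: it is imported verbatim from \cite{LLS-A}, and the introduction summarizes its proof as exactly the strategy you describe, namely reparametrizing the string (BZL) data of a path to the highest weight vector as $\aa(T)$ and $\bb(T)$ and matching the Brubaker--Bump--Friedberg/Bump--Nakasuji boxing and circling decorations \eqref{ourbox}, \eqref{ourcirc} so that the Gauss-sum contributions collapse to $C_{\lambda+\rho}(T;q)\,\zz^{\wt(T)}$. Your proposal is therefore a correct outline of essentially the same route as the cited proof, and you rightly identify the substantive step as verifying that the path-level boxing inequality is faithfully encoded by $\bb_{i,j}\ge\theta_i+\bb_{i+1,j+1}$.
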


The main result of this note is a new statistic on $T\in \BB(\lambda+\rho)$ to compute $C_{\lambda+\rho}(T;q)$ without the need to construct the sequence $\aa(T)$.

\begin{dfn}\label{def}
Let $T\in \BB(\lambda+\rho)$ be a tableau.
\begin{enumerate}
\item\label{defseg} Let $T \in \BB(\lambda+\rho)$ be a tableaux.  We define a {\it $k$-segment} \cite{LS-ABCDG,LS-A} of $T$ (in the $i$th row) to be a maximal consecutive sequence of $k$-boxes in the $i$th row for any $i+1 \le k \le r+1$.  Denote the total number of $k$-segments in $T$ by $\seg(T)$.
\item\label{defflush} Let $1 \le i < k \le r+1$ and suppose $\ell$ is the smallest integer greater than $k$ such that there exists an $\ell$-segment in the $(i+1)$st row of $T$.  A $k$-segment in the $i$th row of $T$ is called {\it flush} if the leftmost box in the $k$-segment and the leftmost box of the $\ell$-segment are in the same column of $T$.  If, however, no such $\ell$ exists, then this $k$-segment is said to be {\it flush} if the number of boxes in the $k$-segment is equal to $\theta_i$.  Denote the number of flush $k$-segments in $T$ by $\flush(T)$.
\end{enumerate}
\end{dfn}

\begin{ex}
Let $r=4$, $\lambda = 2\omega_3$, and
\[
T = \young(112335,23344,3555,5).
\]
It is easy to see that $\seg(T) = 7$ because there is a $2$-segment in the first row, a $3$-segment in both the first and second rows, a $4$-segment in the second row, and a $5$-segment in each of the first, third, and fourth rows.  Moreover, $\flush(T) = 5$ because each $3$-segment and $5$-segment is flush.  In other words, the $2$-segment in the first row and the $4$-segment in the second row are not flush.
\end{ex}

The motivating picture for the definition of flush is given in Figure \ref{fig:flush}, and will useful to keep in mind for the proof of Theorem \ref{algorithm}.

\begin{figure}[ht]
\centering
\begin{tikzpicture}[xscale=1.5,yscale=.65]
\node (i) at (0,1.5) {$i$th row};
\node (i+1) at (0,0.5) {$(i+1)$st row};
\draw[color=white,fill=gray!30] (1,2) -- (3,2) -- (3,1) -- (2,1) -- (2,0) -- (1,0) -- cycle;
\draw[-] (1,2) -- (6,2) -- (6,1) -- (4,1) -- (4,0) -- (1,0) -- cycle;
\draw[-] (4,1) -- (3,1) -- (3,2);
\draw[-] (3,1) -- (2,1) -- (2,0);
\draw[-] (3.1,1.5) -- (5.9,1.5);
\node[fill=white,inner sep=1.7] at (4.5,1.45) {$\bb_{i,k-1}$};
\draw[-] (2.1,0.5) -- (3.9,0.5);
\node[fill=white,inner sep=1.7] at (3,0.45) {$\bb_{i+1,k}$};
\draw [decorate,decoration={brace,amplitude=7pt,mirror},xshift=0.4pt,yshift=-0.4pt](4.,.95) -- (6,.95) node[black,midway,yshift=-0.45cm] {\scriptsize $\theta_i$};
\end{tikzpicture}%\doublespacing
\caption{A diagram motivating the definition of flush.}
%Un diagramme qui motive la d\'efinition de align\'ee.
\label{fig:flush}
\end{figure}
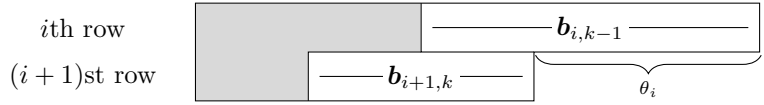

\begin{thm}\label{algorithm}
Let $T\in \BB(\lambda+\rho)$ be a tableau.
\begin{enumerate}
\item\label{circbox} Let $1 \le i < k \le r$.  Suppose the following two conditions hold.
\begin{enumerate}
\item\label{nocircle} There is no $k$-segment in the $i$th row of $T$.
\item\label{nobox} Let $\ell$ be the smallest integer greater than $k$ such that there exist an $\ell$-segment in the $i$th row.  There is no $p$-segment in the $(i+1)$st row, for $k+1\le p \le \ell$, and the $\ell$-segment is flush.\footnote{By convention, if no such $\ell$ exists, then condition (\ref{nobox}) is not satisfied.}
\end{enumerate}
Then $C_{\lambda+\rho}(T;q) = 0$.
\item\label{segflush} If conditions {\upshape(\ref{nocircle})} and {\upshape(\ref{nobox})} are not satisfied, then
\[
C_{\lambda+\rho}(T;q) = (-q^{-1})^{\flush(T)}(1-q^{-1})^{\seg(T)-\flush(T)}.
\]
\end{enumerate}
\end{thm}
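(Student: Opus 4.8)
The plan is to translate the boxing and circling decorations on $\aa(T)$ from \eqref{ourbox}--\eqref{ourcirc} into the language of segments, and then read off $\bbox(T)$ and $\non(T)$ as $\flush(T)$ and $\seg(T)-\flush(T)$. Throughout I would fix the correspondence $k=j+1$, so that the entry $\aa_{i,j}$ records the color-$k$ boxes while $\bb_{i,j}=\bb_{i,k-1}$ counts the boxes of row $i$ of color at least $k$. Since the colors along each row weakly increase, the color-$\ge k$ boxes of row $i$ occupy the rightmost $\bb_{i,k-1}$ cells, so the leftmost box of the $k$-segment in row $i$ (when one exists) sits in column $\ell_i-\bb_{i,k-1}+1$, and likewise the leftmost box of the $\ell$-segment of row $i+1$ from Definition~\ref{def}(\ref{defflush}) sits in column $\ell_{i+1}-\bb_{i+1,k}+1$. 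First I would record the circling dictionary: by \eqref{ourcirc}, $\aa_{i,j}$ is circled precisely when $\aa_{i,j}=\aa_{i-1,j}$, i.e.\ exactly when row $i$ has no color-$k$ box, which is to say no $k$-segment. Hence the entries of $\aa(T)$ that are \emph{not} circled are in bijection with the segments of $T$, and their number is $\seg(T)$.

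The heart of the argument is the equivalence
\[
\text{$\aa_{i,j}$ is boxed} \iff \text{the $k$-segment in row $i$ is flush.}
\]
The key inequality is $\bb_{i,k-1}\le \theta_i+\bb_{i+1,k}$, which I would prove from semistandardness: every color-$\ge k$ box of row $i$ lying in a column $c\le \ell_{i+1}$ forces, by column-strictness, a box of color $\ge k+1$ directly below it in row $i+1$, while at most $\theta_i=\ell_i-\ell_{i+1}$ of the color-$\ge k$ boxes can lie in columns $c>\ell_{i+1}$; comparing the rightmost-justified ranges then yields the bound. Since the boxing rule \eqref{ourbox} is the reverse inequality $\bb_{i,k-1}\ge \theta_i+\bb_{i+1,k}$, boxing occurs if and only if $\bb_{i,k-1}=\theta_i+\bb_{i+1,k}$. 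By the column formulas above, this equality says exactly that the leftmost box of the $k$-segment of row $i$ and the leftmost box of the $\ell$-segment of row $i+1$ share a column, and, when $\bb_{i+1,k}=0$ so that no such $\ell$ exists, that this leftmost box sits in column $\ell_{i+1}+1$, i.e.\ that the relevant count equals $\theta_i$. This is precisely the definition of flush, and I expect the equality analysis together with the boundary bookkeeping in the case $\bb_{i+1,k}=0$ to be the main obstacle.

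For part~(\ref{circbox}), observe that an entry $\aa_{i,j}$ is simultaneously circled and boxed exactly when $T$ fails to be strict, so I would show that conditions~(\ref{nocircle}) and~(\ref{nobox}) hold for some $i<k$ if and only if this happens, whence $C_{\lambda+\rho}(T;q)=0$ from the definition of $C_{\lambda+\rho}$. Condition~(\ref{nocircle}) is the circling condition above. Granting it, row $i$ has no color-$k$ box, so its colors $\ge k$ begin at the value $\ell$ appearing in condition~(\ref{nobox}) and $\bb_{i,k-1}=\bb_{i,\ell-1}$; substituting into the boxing equality and using $\bb_{i+1,k}=\bb_{i+1,\ell}$, which is the content of ``no $p$-segment in row $i+1$ for $k+1\le p\le \ell$,'' converts boxing of $\aa_{i,j}$ into flushness of the $\ell$-segment of row $i$, that is, into condition~(\ref{nobox}). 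The footnote case where no such $\ell$ exists gives $\bb_{i,k-1}=0$, which cannot satisfy $\bb_{i,k-1}\ge\theta_i\ge 1$, so boxing fails there, consistent with the stated convention.

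Finally, for part~(\ref{segflush}), if conditions~(\ref{nocircle}) and~(\ref{nobox}) fail for every $i<k$, then no entry of $\aa(T)$ is both circled and boxed, so $T$ is strict. By the two dictionaries, the circled entries are exactly those with no segment, the boxed entries are exactly the flush segments, and the remaining entries are the non-flush segments; hence $\bbox(T)=\flush(T)$ and $\non(T)=\seg(T)-\flush(T)$. Substituting these into $C_{\lambda+\rho}(T;q)=(-q^{-1})^{\bbox(T)}(1-q^{-1})^{\non(T)}$ gives the claimed formula, completing the proof.
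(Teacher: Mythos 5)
Your proposal follows the paper's proof essentially step for step: circling of $\aa_{i,j}$ is identified with the absence of a $k$-segment in row $i$, semistandardness forces $\bb_{i,k-1}\le\theta_i+\bb_{i+1,k}$ so that boxing becomes the equality $\bb_{i,k-1}=\theta_i+\bb_{i+1,k}$, that equality is identified with flushness (including the fallback case $\bb_{i+1,k}=0$, where your column-$(\ell_{i+1}+1)$ formulation is exactly the reading the paper's argument uses), and part~(\ref{segflush}) then follows from $\bbox(T)=\flush(T)$ and $\non(T)=\seg(T)-\flush(T)$. The only differences are cosmetic: you prove the key inequality and the circling dictionary directly, where the paper asserts the former in one line and cites Lemma 2.5 and Proposition 2.7 of \cite{LLS-A} for the latter, so the argument is correct and essentially the same as the paper's.
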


\begin{proof}
First note that $\bb_{i,k-1} \ge \bb_{i+1,k} + \theta_i$ implies $\bb_{i,k-1} = \bb_{i+1,k}+\theta_i$ because $T$ is semistandard.

We claim that conditions (\ref{nocircle}) and (\ref{nobox}) are equivalent to $\aa_{i,k-1}$ in $\aa(T)$ being both boxed and circled.  First, there is no $k$-segment in the $i$th row if and only if $\aa_{i,k-1} = \aa_{i-1,k-1}$, which justifies condition (\ref{nocircle}).  It now remains to show that (\ref{nobox}) is equivalent to $\aa_{i,k-1}$ being boxed.  If condition (\ref{nobox}) holds, then, by the definition of $\ell$ and being flush, we have
$
\bb_{i,k-1} = \bb_{i,\ell-1} = \bb_{i+1,\ell} + \theta_i = \bb_{i+1,k} + \theta_i,
$
so $\aa_{i,k-1}$ is boxed.  On the other hand, if $\aa_{i,k-1}$ is boxed and there is no $k$-segment in the $i$th row, then $\bb_{i,k-1} = \bb_{i,\ell-1} = \bb_{i+1,k} + \theta_i$, where $\ell$ is as in condition (\ref{nobox}).  The only way $\bb_{i,\ell-1} = \bb_{i+1,k}+\theta_i$ is if the leftmost box of the $\ell$-segment in the $i$th row and the leftmost box of the $m$-segment in the $(i+1)$st row are in the same column, where $m$ is the smallest integer greater than $k$ such that there exists an $m$-segment in the $(i+1)$st row.  By the semistandardness of $T$, this implies condition (\ref{nobox}) must be satisfied.

To see condition (\ref{segflush}), it follows from Lemma 2.5 and Proposition 2.7 of \cite{LLS-A} that $\seg(T)$ is exactly the number of entries in $\aa(T)$ which are not circled.  Additionally, it follows immediately from the definition that a $k$-segment in the $i$th row is flush if and only if $\bb_{i,k-1} = \bb_{i+1,k}+\theta_i$.  Hence $\bbox(T) = \flush(T)$ and $\non(T) = \seg(T)-\flush(T)$, as required.
\end{proof}

\begin{dfn}
If $T \in \BB(\lambda+\rho)$ is a tableau and if conditions (\ref{nocircle}) and (\ref{nobox}) in Theorem \ref{algorithm} are satisfied for some $1\le i < k \le r$, then we say $T$ \emph{has gaps}.  If no such pair $(i,k)$ satisfies conditions (\ref{nocircle}) and (\ref{nobox}) in Theorem \ref{algorithm}, then we say $T$ is \emph{gapless}.
\end{dfn}

Using the idea of gaps, we may rewrite
\[
C_{\lambda+\rho}(T;q) =
\left\{\begin{array}{cl}
(-q^{-1})^{\flush(T)}(1-q^{-1})^{\seg(T)-\flush(T)} & \text{if $T$ is gapless},\\
0 & \text{if $T$ has gaps},
\end{array}\right.
\]
to get
\begin{equation}\label{CSflush}
\zz^\rho\chi_\lambda(\zz)\prod_{\alpha>0} (1-q^{-1}\zz^{-\alpha})
= \sum_{\substack{T\in \BB(\lambda+\rho) \\ T \,\mathrm{gapless}}} (-q^{-1})^{\flush(T)}(1-q^{-1})^{\seg(T)-\flush(T)} \zz^{\wt(T)}.
\end{equation}

\begin{ex}
Let 
\[
T = \young(11134,2224,34)\ .
\]
There is no $2$-segment in the first row nor is there a $3$-segment in the second row.  However, the $3$-segment in the first row is flush, so $T$ has a gap and $C_{\lambda+\rho}(T;q) =0$.  As a check, we have $\aa(T) = (0,1,1;1,2;3)$ and $\bb(T) = (2,2,1;1,1;1)$.  By \eqref{ourcirc}, $\aa_{1,1} =0$ is circled because the (non-existent) entry $\aa_{0,1} =0$.  Moreover, by \eqref{ourbox}, $\aa_{1,1}$ is boxed because $\bb_{1,1} = 2$ and $\bb_{2,2} + \theta_1 = 1+1$.  
\end{ex}

\begin{ex}
Let 
\[
T = \young(11222334,223334,444)\ .
\]
All possible $k$-segments are included in $T$, so condition (\ref{nocircle}) of Theorem \ref{algorithm} is not satisfied, so $T$ is gapless and $C_{\lambda+\rho}(T;q) \neq 0$.  There is a $2$-segment in the first row, a $3$-segment in both the first and second rows, and a $4$-segment in all three rows.  Thus $\seg(T) = 6$.  Next, the $2$-segment in the first row, the $3$-segment in the first row, and $4$-segment in the last row are each flush, so $\flush(T) = 3$.  Hence $C_{\lambda+\rho}(T;q) = (-q^{-1})^3(1-q^{-1})^3$.  As a check, $\aa(T) = (3,2,1;5,2;5)$, where no entry is circled and $\aa_{1,1}$, $\aa_{1,2}$, and $\aa_{3,3}$ are all boxed, as required.
\end{ex}

\begin{remark}
While the proof above made use of the circling and boxing rules of \cite{BBF:11-book}, the statistics $\seg(T)$ and $\flush(T)$ are intrinsic to the tableaux.  Thus, generalizing these statistics to other Lie algebras may yield an appropriate Casselman-Shalika formula expansion over a crystal graph without the need for circling and boxing rules.   At this moment, such an expansion is important as there are no proven circling and boxing rules in types $D_r$ and $G_2$.  (See \cite{CG:S} for more on the conjecture in type $D_r$ and \cite{FGG:S} for the conjecture in type $G_2$.)
\end{remark}

\section*{Acknowldegements}
The author revisited this work whilst discussing related topics at the ICERM Semester Program on ``Automorphic Forms, Combinatorial Representation Theory and Multiple Dirichlet Series'' during Spring 2013, and then again while preparing a talk at the BIRS workshop entitled ``Whittaker Functions: Number Theory, Geometry, and Physics'' (13w5154) in October 2013, so he would like to thank the various organizers for these opportunities.  The author also valued from informative discussions with Kyu-Hwan Lee and Phil Lombardo.
\bibliography{CS-A}{}
\bibliographystyle{amsplain}
\end{document}